\newtheorem{theorem}{Theorem}[section]
\newtheorem{example}{Example}[section]
\theoremstyle{definition}
\newtheorem{definition}[theorem]{Definition}
\newtheorem{remark}{Remark}[section]
\subjclass[2010]{34C23; 92B05; 92D25; 92D40}
 \keywords{Square root response; Finite time extinction; Global existence}
\begin{document}
\title[Square root functional response]{Comment on Predator-prey dynamical behavior and stability with Square root functional response}
\author[Bearden, Antwi-Fordjour]{}
\maketitle

\centerline{\scshape Kendall H. Bearden$^1$ and Kwadwo Antwi-Fordjour$^{*1}$}

\smallskip
\centerline{ *Corresponding author's email: kantwifo@samford.edu}

\medskip
{\footnotesize
  \centerline{1) Department of Mathematics and Computer Science,}
 \centerline{ Samford University,}
 \centerline{ Birmingham, AL 35229, USA.}
}

\begin{abstract}
In this research, we revisit the paper by Pal et al. [Int. J. Appl. Comput. Math (2017) 3:1833-1845] and comment on the claim that global stability of the interior equilibrium point depends on some key parameters. This is not true, and we have provided detailed proof to this effect. The considered model is a modified Lotka-Volterra model where square root functional response is involved. The square root functional response is non-differentiable at the origin and hence cannot be studied with standard local stability tools. Furthermore, our numerical simulations indicate that we can classify the phase portrait into two modes of behavior where some positive initial conditions converge towards the predator axis in finite time. 
\end{abstract}

\section{Introduction}
\noindent Finite time extinction of species is possible in systems with functional responses of the forms $f(x)=x^p$ and $g(x)=\frac{x^p}{c+x^p}$, where $0<p<1$ and $c$ is the half saturation constant. When $p=1$, $f(x)$ is the type I functional response and $g(x)$ the type II functional response. The square root functional response is a special case of $f(x)$ when $p=0.5$ and possess interesting complex dynamics. For further readings on the dynamical behavior of models exhibiting finite time extinction, please see \cite{BS19, KRM20,APTW23} and references therein.  \par

Furthermore, the square root functional response has gained tremendous importance in the past few years in particular after the publication by Braza \cite{B12}. According to Braza, the square root functional response originally proposed by Gauss \cite{G34} models systems where the prey species exhibit strong herd behavior during interaction with predators. Herd behavior refers to the phenomenon in which species in a group act together for a given period without coordination by a central authority \cite{APTW23}.
 

In \cite{P17}, authors considered the modified Lotka-Volterra model system where the prey species exhibit herd behavior. The model system is given by
\begin{equation}\label{EquationMain}
\begin{cases}
\dfrac{dx }{dt} &=rx(1-x)-  y\sqrt{x}, \qquad x(t)\geq 0,\\ \\
\dfrac{dy }{dt} &=-\alpha y + \beta y\sqrt{x} \qquad y(t)\geq 0.
\end{cases}
\end{equation}

\noindent The model system \eqref{EquationMain} was also considered by Braza \cite{B12} where $r=1$. The variables and parameters used in the model are defined in Table \ref{tab:table1}:
\begin{table}[h!]
  \begin{center}
    \caption{List of parameters used in the model. All parameters considered are positive constants.}
   \label{tab:table1}
    \begin{tabular}{@{}l l@{}}
     Parameter/Variables & Description \\
      \midrule
$x$                         & Prey population\\
$y$                         & Predator population\\
$r$                           & Birth rate of prey \\
$\alpha$                           & Intrinsic death rate of the predator population\\
$\beta$                        & Biomass conversion efficiency\\

     \bottomrule
    \end{tabular}
  \end{center}
\end{table}

\section{Stability Guidelines}
\noindent The model system \eqref{EquationMain} admits the following three non-negative equilibrium points.
\begin{itemize}
\item[(i)] Extinction equilibrium, $P_0=(0,0)$
\item[(ii)] Prey only equilibrium, $P_1=(1,0)$
\item[(iii)] Interior equilibrium, $P_2=(x^*,y^*)$, where $x^*=\left(\frac{\alpha}{\beta}\right)^2$ and $y^*=\frac{r\alpha (\beta^2-\alpha^2)}{\beta^3}$
\end{itemize}

We provide a recap of the local stability result of the interior equilibrium from \cite{P17}.

\begin{theorem}[Theorem 4, \cite{P17}]
Equilibrium point $P_2$ is locally asymptotically stable if and only if $\frac{\alpha}{\beta}>\frac{1}{\sqrt{3}}$ and unstable if $\frac{\alpha}{\beta}<\frac{1}{\sqrt{3}}$.
\end{theorem}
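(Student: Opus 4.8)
The plan is to linearize the vector field about $P_2$ and apply the Routh--Hurwitz criterion for planar systems (namely $\Tr(J)<0$ together with $\det(J)>0$). Before doing so, I would note the one conceptual point worth checking: although $\sqrt{x}$ is non-differentiable at the origin, the interior equilibrium satisfies $x^*=(\alpha/\beta)^2>0$, so $P_2$ lies in the open first quadrant, bounded away from that singularity. Hence the right-hand side of \eqref{EquationMain} is smooth on a neighborhood of $P_2$ and the classical Hartman--Grobman linearization theorem applies; the subtlety that the abstract of this comment emphasizes for the \emph{global} problem does not obstruct the \emph{local} analysis here.

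First I would write $f(x,y)=rx(1-x)-y\sqrt{x}$ and $g(x,y)=-\alpha y+\beta y\sqrt{x}$ and form the Jacobian
\[
J=\begin{pmatrix} r-2rx-\dfrac{y}{2\sqrt{x}} & -\sqrt{x} \\[2mm] \dfrac{\beta y}{2\sqrt{x}} & -\alpha+\beta\sqrt{x}\end{pmatrix}.
\]
Evaluating at $P_2$ and using $\sqrt{x^*}=\alpha/\beta$, the lower-right entry collapses to $-\alpha+\beta(\alpha/\beta)=0$; this is the key simplification and reflects the defining relation for $x^*$. Substituting $y^*=r\alpha(\beta^2-\alpha^2)/\beta^3$ into the remaining entries, the routine (but error-prone) algebra should reduce the top-left entry to $\tfrac{r(\beta^2-3\alpha^2)}{2\beta^2}$ and the lower-left entry to $\tfrac{r(\beta^2-\alpha^2)}{2\beta}$.

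With the $(2,2)$ entry equal to zero, the trace and determinant take the clean forms
\[
\Tr(J)=\frac{r(\beta^2-3\alpha^2)}{2\beta^2},\qquad \det(J)=\frac{r\alpha(\beta^2-\alpha^2)}{2\beta^2}.
\]
The very existence of $P_2$ in the first quadrant requires $y^*>0$, i.e.\ $\beta^2>\alpha^2$, which forces $\det(J)>0$ automatically. Stability is therefore dictated entirely by the sign of the trace: $\Tr(J)<0$ exactly when $\beta^2<3\alpha^2$, equivalently $\alpha/\beta>1/\sqrt{3}$, giving local asymptotic stability; conversely $\Tr(J)>0$, i.e.\ $\alpha/\beta<1/\sqrt{3}$, produces an eigenvalue with positive real part and hence instability. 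This yields both directions of the stated equivalence.

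The main obstacle is thus not analytical but bookkeeping: correctly carrying the substitution of $y^*$ so that the threshold numerator $\beta^2-3\alpha^2$ emerges cleanly. Once it does, the value $1/\sqrt{3}$ appears purely algebraically, and the borderline case $\alpha/\beta=1/\sqrt{3}$ (where $\Tr(J)=0$ while $\det(J)>0$, so the linearization is a center) is exactly the Hopf-type degeneracy that the first-order test cannot decide — which is why the theorem's ``if and only if'' is stated only for the strict inequalities.
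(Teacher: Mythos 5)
Your linearization argument is correct, but there is nothing in this paper to compare it against: the statement is presented purely as a recap, attributed to ``Theorem 4, \cite{P17}'', and the comment paper never proves it (its own contribution is the finite-time-extinction theorem that refutes the \emph{global} claim). Your proof therefore supplements rather than parallels the text, and it is the standard route the original reference takes. The computations check out: since $\beta\sqrt{x^*}=\alpha$, the $(2,2)$ entry of the Jacobian vanishes at $P_2$, and substituting $y^*=r\alpha(\beta^2-\alpha^2)/\beta^3$ indeed gives
\[
\Tr(J)=\frac{r(\beta^2-3\alpha^2)}{2\beta^2},\qquad
\det(J)=\left(\frac{\alpha}{\beta}\right)\cdot\frac{r(\beta^2-\alpha^2)}{2\beta}=\frac{r\alpha(\beta^2-\alpha^2)}{2\beta^2}>0,
\]
the positivity of the determinant being automatic from the existence condition $\beta>\alpha$ for $y^*>0$, so stability hinges on the sign of $\beta^2-3\alpha^2$ exactly as you say. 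Your preliminary remark that $x^*>0$ places $P_2$ away from the non-differentiability of $\sqrt{x}$ at the origin is also precisely the distinction this comment paper is built on: local linearization at $P_2$ is legitimate, while behavior near the prey axis (where solutions can reach $x=0$ in finite time) is what invalidates the global claim. One caveat: taken literally, the ``if and only if'' also asserts that $P_2$ is \emph{not} asymptotically stable at the borderline $\alpha/\beta=1/\sqrt{3}$, and your trace--determinant test cannot decide that case since the linearization is then a center; you flag this honestly, but closing it would require a higher-order (Lyapunov coefficient/Hopf) computation, which neither you nor the papers involved carry out here.
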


 The authors in \cite{P17} proved the global asymptotic stability of the interior equilibrium point $P_2(x^*,y^*)$ under certain parametric restriction. This is not true and the proof is not correct. 

\begin{theorem}[Theorem 7, \cite{P17}]
The interior equilibrium point $P_2(x^*,y^*)$ of the model system \eqref{EquationMain} is globally asymptotically stable if 
\begin{equation}\label{ineq:1}
\dfrac{\alpha}{\beta}>\dfrac{1}{\sqrt{3}}.
\end{equation}

\end{theorem}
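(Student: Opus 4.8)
Because the surrounding discussion asserts that this claim is \emph{false}, the plan is not to prove the displayed statement but to refute it: I will show that for \emph{every} choice of positive parameters, including those satisfying \eqref{ineq:1}, there is a nonempty open set of interior initial data whose orbits drive the prey to zero in finite time. Such an orbit reaches the invariant predator axis $\{x=0\}$ and then flows along it toward the extinction equilibrium $P_0=(0,0)$, so it cannot converge to $P_2=(x^*,y^*)$ with $x^*=\lp\alpha/\beta\rp^2>0$. Since \eqref{ineq:1} is precisely the local stability threshold of the preceding theorem, this simultaneously exhibits the gap in \cite{P17}: local stability does not upgrade to global stability in the presence of the non-Lipschitz response $y\sqrt{x}$.

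The first step is to write the prey equation from \eqref{EquationMain} in the factored form $\dot{x}=\sqrt{x}\lp r\sqrt{x}(1-x)-y\rp$, which isolates the sign of $\dot{x}$ and makes the square-root singularity at the origin explicit. Fixing an arbitrary $x_0\in(0,1)$ and starting the predator at a large value $y(0)=Y$, I would prove that as long as $x(t)\le x_0$ and $y(t)\ge Y/2$ one has $\dot{x}\le-\tfrac{Y}{4}\sqrt{x}$, provided $Y\ge 4r\sqrt{x_0}$. Integrating the comparison equation $\dot{u}=-\tfrac{Y}{4}\sqrt{u}$ then gives $\sqrt{x(t)}\le\sqrt{x_0}-\tfrac{Y}{8}t$, so the prey is forced to hit $x=0$ no later than the finite time $T=8\sqrt{x_0}/Y$.

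The second step is to close the bootstrap by verifying the two standing hypotheses on $[0,T]$. Monotonicity of $x$ (it is strictly decreasing while the bracket is negative) keeps $x(t)\le x_0$ automatically, and from $\dot{y}=y\lp\beta\sqrt{x}-\alpha\rp\ge-\alpha y$ I get $y(t)\ge Ye^{-\alpha t}\ge Ye^{-8\alpha\sqrt{x_0}/Y}$, which exceeds $Y/2$ once $Y\ge 8\alpha\sqrt{x_0}/\ln 2$. Taking $Y$ above both thresholds makes the hypotheses self-consistent, so the finite-time extinction is genuine and the counterexample to global stability is complete. I would then note that any Lyapunov function constructed by the usual recipe inherits the non-differentiability of $\sqrt{x}$ along $\{x=0\}$ and therefore cannot certify convergence of orbits that leave the interior through that boundary, which is the precise defect in the original argument.

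The main obstacle is keeping the predator density bounded below long enough for the square-root sink to finish its work. Both the logistic prey growth $rx(1-x)$ and the exponential decay of $y$ oppose extinction, so the estimates must be arranged so that the extinction time $T\sim\sqrt{x_0}/Y$ is short compared with the predator decay time $1/\alpha$; this balance is exactly what forces the choice of a large initial predator population and is the one point where the inequalities require care.
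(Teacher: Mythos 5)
Your reading of the task is the right one: the paper quotes this theorem from Pal et al.\ only to refute it, and your counterexample construction is sound. Conceptually you and the paper use the same mechanism — finite-time prey extinction forced by the non-Lipschitz sink $y\sqrt{x}$, with the universal bound $y(t)\geq y(0)e^{-\alpha t}$ (from $\dot{y}\geq -\alpha y$) guaranteeing the predator stays large long enough, and a differential inequality for $\sqrt{x}$ obtained by dividing the prey equation by $\sqrt{x}$. The execution differs. The paper keeps the time-dependent forcing $-\tfrac{1}{2}y(0)e^{-\alpha t}$, applies the integrating factor $e^{-rt/2}$, and integrates exactly, arriving at $\sqrt{x(t)}\,e^{-rt/2}\leq \sqrt{x(0)}-\frac{y(0)}{r+2\alpha}\left(1-e^{-(r/2+\alpha)t}\right)$; this yields extinction for every initial condition with $y(0)>(r+2\alpha)\sqrt{x(0)}$, i.e.\ an explicit threshold curve $K(x)=(r+2\alpha)\sqrt{x}$ that the paper can plot against the numerically computed separatrix. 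You instead freeze the coefficients on a short interval ($y\geq Y/2$, prey growth at most $r\sqrt{x_0}$), obtain the constant-rate decay $\frac{d\sqrt{x}}{dt}\leq -\frac{Y}{8}$, and close a bootstrap by checking $\alpha T\leq \ln 2$. This is more elementary — no integrating factor, no exact integration — but costs sharpness: you need $Y\geq \max\left(4r\sqrt{x_0},\,\frac{8\alpha\sqrt{x_0}}{\ln 2}\right)$, roughly $\max(4r,\,11.5\,\alpha)\sqrt{x_0}$, which always exceeds the paper's threshold $(r+2\alpha)\sqrt{x_0}$, so the set of initial data you certify as extinction-bound is strictly smaller (though still nonempty and open, which is all the refutation requires). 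Two of your side remarks go beyond the paper and are worth keeping: the observation that the orbit, once on the axis, slides to $P_0$, and the diagnosis that the defect in Pal et al.\ is the non-differentiability of $y\sqrt{x}$ at $x=0$; just note that forward invariance of the axis deserves its own one-line argument ($\dot{x}<0$ wherever $x$ is small and positive and $y$ is bounded below), since forward uniqueness is not automatic for non-Lipschitz vector fields.
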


Next, we shall show the existence of extinction of the prey species in finite time under the conditions provided for global stability of $P_2$ in \cite{P17}. Hence the interior equilibrium point is not globally asymptotically stable as claimed by the authors.

\subsection{Finite Time Extinction of Prey}

\begin{theorem}
The prey equation $x=x(t)$ with some carefully chosen initial conditions $x(0)>0,y(0)>0$ will go extinct in finite time if
\begin{equation*}
K(x(0))<y(0),
\end{equation*}
where $K(x(0))=(r+2\alpha)\sqrt{x(0)}$. Thus, the interior equilibrium point $P_2$ of the model system \eqref{EquationMain} cannot exist globally for any parameter restriction.
\end{theorem}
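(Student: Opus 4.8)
The plan is to regularize the non-differentiability of $\sqrt{x}$ at the origin by passing to the variable $u=\sqrt{x}$, to decouple the prey equation from the predator via a one-sided bound on $y$, and then to dominate $u$ by the solution of an explicitly integrable linear equation that vanishes in finite time exactly when $K(x(0))<y(0)$. First I would fix initial data with $x(0)>0$, $y(0)>0$ and set $u(t)=\sqrt{x(t)}$ while $x(t)>0$; differentiating the first equation of \eqref{EquationMain} gives $\dot u=\tfrac{1}{2}\left(ru(1-u^2)-y\right)$, which is smooth for $u>0$. For the predator, note that $y=0$ is invariant so $y(t)>0$, and since $\beta\sqrt{x}\ge 0$ the second equation yields $\dot y=y(-\alpha+\beta\sqrt{x})\ge-\alpha y$; Gronwall's inequality then gives the lower bound $y(t)\ge y(0)e^{-\alpha t}$ for all $t\ge 0$.

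Next I would convert the prey equation into a scalar differential inequality. Because $u\ge 0$ forces $ru(1-u^2)=ru-ru^3\le ru$, the evolution of $u$ obeys $\dot u\le \tfrac{r}{2}u-\tfrac{y(0)}{2}e^{-\alpha t}$ on the whole interval where $u>0$. I would then compare $u$ with the solution $v$ of the linear problem $\dot v=\tfrac{r}{2}v-\tfrac{y(0)}{2}e^{-\alpha t}$, $v(0)=u(0)$, whose right-hand side is globally Lipschitz in $v$. Using the integrating factor $e^{-rt/2}$ one solves this explicitly as
\[
v(t)=e^{rt/2}\left[\,u(0)-\frac{y(0)}{r+2\alpha}\left(1-e^{-(r/2+\alpha)t}\right)\right].
\]

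The bracketed factor starts at $u(0)>0$, is strictly decreasing in $t$, and tends to $u(0)-\tfrac{y(0)}{r+2\alpha}$, a limit that is negative precisely when $(r+2\alpha)u(0)<y(0)$, that is, when $K(x(0))=(r+2\alpha)\sqrt{x(0)}<y(0)$. Under this hypothesis the bracket has a unique zero at the finite time $T=\tfrac{2}{r+2\alpha}\ln\!\left(\tfrac{y(0)}{y(0)-K(x(0))}\right)$, so $v(T)=0$. The scalar comparison principle gives $u(t)\le v(t)$ for as long as $u>0$; if the first zero of $u$ exceeded $T$ then $u(T)\le v(T)=0$ would contradict $u(T)>0$, so $u$ must vanish at some $\tau\le T$. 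Since $u=\sqrt{x}\ge 0$, this forces $x(\tau)=0$: the prey is extinct at the finite time $\tau$. Because $x^*=(\alpha/\beta)^2>0$, a trajectory reaching the predator axis in finite time cannot converge to $P_2$, so $P_2$ is not globally asymptotically stable for any choice of parameters, contradicting Theorem 7 of \cite{P17}.

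The main obstacle is making the comparison rigorous up to the extinction instant: one must confirm that the inequality $\dot u\le\tfrac{r}{2}u-\tfrac{y(0)}{2}e^{-\alpha t}$ persists on the entire interval where $u>0$, that the substitution $u=\sqrt{x}$ stays valid until $u$ first reaches $0$, and that $v$ genuinely attains the value $0$ at a finite time rather than merely approaching it. The clean emergence of the threshold $r+2\alpha$, appearing both as the coefficient $\tfrac{r}{2}+\alpha$ in the exponent and in the normalization $\tfrac{y(0)}{r+2\alpha}$, is exactly what ties the extinction criterion to $K(x(0))<y(0)$, so I would keep all constants exact and avoid discarding the $e^{-\alpha t}$ decay of the predator lower bound.
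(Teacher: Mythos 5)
Your proposal is correct and follows essentially the same route as the paper: the same exponential lower bound $y(t)\ge y(0)e^{-\alpha t}$, the same passage to $\sqrt{x}$ with the bound $rx(1-x)\le rx$, and the same integrating factor $e^{-rt/2}$; your explicit comparison solution $v(t)$ is just the paper's integrated inequality \eqref{ineq:w} packaged as a comparison-principle argument. The only additions are refinements the paper leaves implicit, namely the explicit extinction-time bound $T=\tfrac{2}{r+2\alpha}\ln\bigl(\tfrac{y(0)}{y(0)-K(x(0))}\bigr)$ and the justification that the comparison persists up to the first zero of $u$.
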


\begin{proof}
Consider the predator equation in the model system \eqref{EquationMain}
\begin{align*}
\dfrac{dy }{dt} &=-\alpha y + \beta y\sqrt{x}\\
& \geq -\alpha y
\end{align*}
since $ \beta y\sqrt{x}\geq 0$, it implies 
\begin{align}\label{y bound}
y(t)\geq y(0)e^{-\alpha t}
\end{align}

\noindent Next, the prey equation in the model system \eqref{EquationMain} gives
\begin{align*}
\dfrac{dx }{dt} &=rx(1-x)-  y\sqrt{x}\\
&\leq rx(1-x)
\end{align*}
which implies that 
\[x(t)\leq 1\]
for all $t>0$ via comparison to the logistic equation (i.e. $\frac{dx}{dt}=rx(1-x)$). Now
\begin{align*}
\dfrac{dx }{dt} &=rx(1-x)-  y\sqrt{x}\\
&\leq rx-\sqrt{x}y
\end{align*}

\noindent We divide both sides by $\sqrt{x}$ since $x$ is positive.
\begin{align}\label{ineq:e1}
\dfrac{dx }{dt}\dfrac{1}{\sqrt{x}} &\leq  r\sqrt{x}-y
\end{align}
Notice that the left hand side of \eqref{ineq:e1} can be rewritten as
\[\dfrac{dx }{dt}\dfrac{1}{\sqrt{x}} =\dfrac{1}{0.5}\dfrac{d(\sqrt{x})}{dt}\]
Thus
\begin{align*}
\dfrac{1}{0.5}\dfrac{d(\sqrt{x})}{dt} &\leq  r\sqrt{x}-y
\end{align*}

\noindent Applying the lower bound on $y$ from \eqref{y bound}, we obtain
\begin{align}
\dfrac{d(\sqrt{x})}{dt} &\leq  0.5 r\sqrt{x}-0.5 y(0)e^{-\alpha t} \\
\dfrac{d(\sqrt{x})}{dt} -  0.5 r\sqrt{x}&\leq -0.5 y(0)e^{-\alpha t} \label{ineq:a}
\end{align}

\noindent Clearly, from the inequality in \eqref{ineq:a},  $e^{-0.5rt}$ is the integrating factor. We multiply both sides of the inequality by the integrating factor.

\begin{align}
e^{-0.5rt}\left(\dfrac{d(\sqrt{x})}{dt} -  0.5 r\sqrt{x}\right) &\leq e^{-0.5rt}\left(-0.5 y(0)e^{-\alpha t}\right) \label{ineq:e}
\end{align}

\noindent Note that the left hand side can be simplified as
\begin{equation}\label{eqn:lhs}
e^{-0.5rt}\left(\dfrac{d(\sqrt{x})}{dt} -  0.5 r\sqrt{x}\right)=\dfrac{d\left(\sqrt{x}e^{-0.5rt}\right)}{dt}.
\end{equation}

\noindent Additionally, the right hand side can be simplified as

\begin{equation}\label{eqn:rhs}
e^{-0.5rt}\left(-0.5 y(0)e^{-\alpha t}\right)=-0.5y(0)e^{-(0.5r+\alpha)t}
\end{equation}

\noindent Substituting the results in \eqref{eqn:lhs} and \eqref{eqn:rhs} into the inequality in \eqref{ineq:e}, we obtain

\begin{align}\label{ineq:b}
\dfrac{d\left(\sqrt{x}e^{-0.5rt}\right)}{dt} &\leq -0.5y(0)e^{-(0.5r+\alpha)t}
\end{align}

\noindent Integrate both sides of \eqref{ineq:b} above in the time interval $[0,t]$.

\begin{align}\label{ineq:c}
\displaystyle\int_0^t \dfrac{d\left(\sqrt{x}e^{-0.5rs}\right)}{ds} ds &\leq -0.5y(0)\displaystyle\int_0^t e^{-(0.5r+\alpha)s} ds
\end{align}

\noindent By application of the Fundamental Theorem of Calculus, the left hand side of \eqref{ineq:c} becomes

\begin{align}\label{eqn:lhs2}
\displaystyle\int_0^t \dfrac{d\left(\sqrt{x}e^{-0.5rs}\right)}{ds} ds &= \sqrt{x(t)}e^{-0.5rt}-\sqrt{x(0)}e^{-0.5r(0)}  \nonumber \\
&= \sqrt{x(t)}e^{-0.5rt}-\sqrt{x(0)}
\end{align}

\noindent Also, the integral on the right hand side of \eqref{ineq:c} becomes

\begin{align}
-0.5y(0)\displaystyle\int_0^t e^{-(0.5r+\alpha)s} ds &=\dfrac{0.5y(0)}{0.5r+\alpha}\left( e^{-(0.5r+\alpha)t}-e^{-(0.5r+\alpha)(0)} \right) \nonumber \\
&=- \dfrac{0.5y(0)}{0.5r+\alpha}\left( 1-e^{-(0.5r+\alpha)t}\right) \label{eqn:5}
\end{align}

\noindent Thus, the inequality in \eqref{ineq:c} becomes
\begin{align}\label{ineq:w}
\sqrt{x(t)}e^{-0.5rt} &\leq \sqrt{x(0)}  - \dfrac{0.5y(0)}{0.5r+\alpha}\left( 1-e^{-(0.5r+\alpha)t}\right) 
\end{align}

\noindent Here, $x(t)$ goes extinct in finite time if

\begin{align}\label{ineq:w1}
\sqrt{x(0)} \leq  \dfrac{0.5y(0)}{0.5r+\alpha} 
\end{align}

\noindent Hence
\begin{align}\label{ineq:w2}
K(x(0)) \leq y(0) 
\end{align}

\noindent where $K(x(0))=(r+2\alpha)\sqrt{x(0)}$.

\end{proof}

\begin{example}
Let $\alpha=0.4$ and $\beta=0.65$, then
\begin{equation}
\dfrac{\alpha}{\beta}=0.61538> \dfrac{1}{\sqrt{3}}=0.57735.
\end{equation}
Clearly, the parametric restriction in \eqref{ineq:1} is meet here.
Furthermore, for $r=0.5$ and initial condition $(x,y)=(0.4,1)$
\begin{equation}
K(x(0))=0.82219<1=y(0).
\end{equation}
Obviously, \eqref{ineq:w2} is satisfied. From Figure \ref{fig:separatrix}(c), the trajectory of the initial condition $(0.4,1)$ hits the predator axis in finite time. This is further substantiated with the time series plot in Figure \ref{fig:separatrix}(b).  Hence $P_2$ cannot be globally asymptotically stable. 
\end{example}

\begin{definition}
A separatrix is the boundary dividing two modes of behavior in a phase portrait, see Figure \ref{fig:separatrix}(c).
\end{definition}

\begin{remark}
The inequality in \eqref{ineq:w2} is a sufficient condition needed to disprove the global stability claim by Pal et al. \cite{P17}. The separatrix (stable manifold) from the phase portrait in Figure \ref{fig:separatrix}(c) cannot be computed explicitly hence we use the derived estimate $K(x)=(r+2\alpha)\sqrt{x}$.  Clearly, there is a gap between the separatrix and $K(x)$. Initial conditions chosen above the separatrix goes to prey extinction in finite time. Also, initial conditions chosen below the separatrix goes to the stable interior equilibrium point $P_2$, see Figure \ref{fig:separatrix}(b).

\end{remark}

\begin{figure}[!htb]
\begin{center}
\subfigure[]{
    \includegraphics[width=6.15cm, height=6cm]{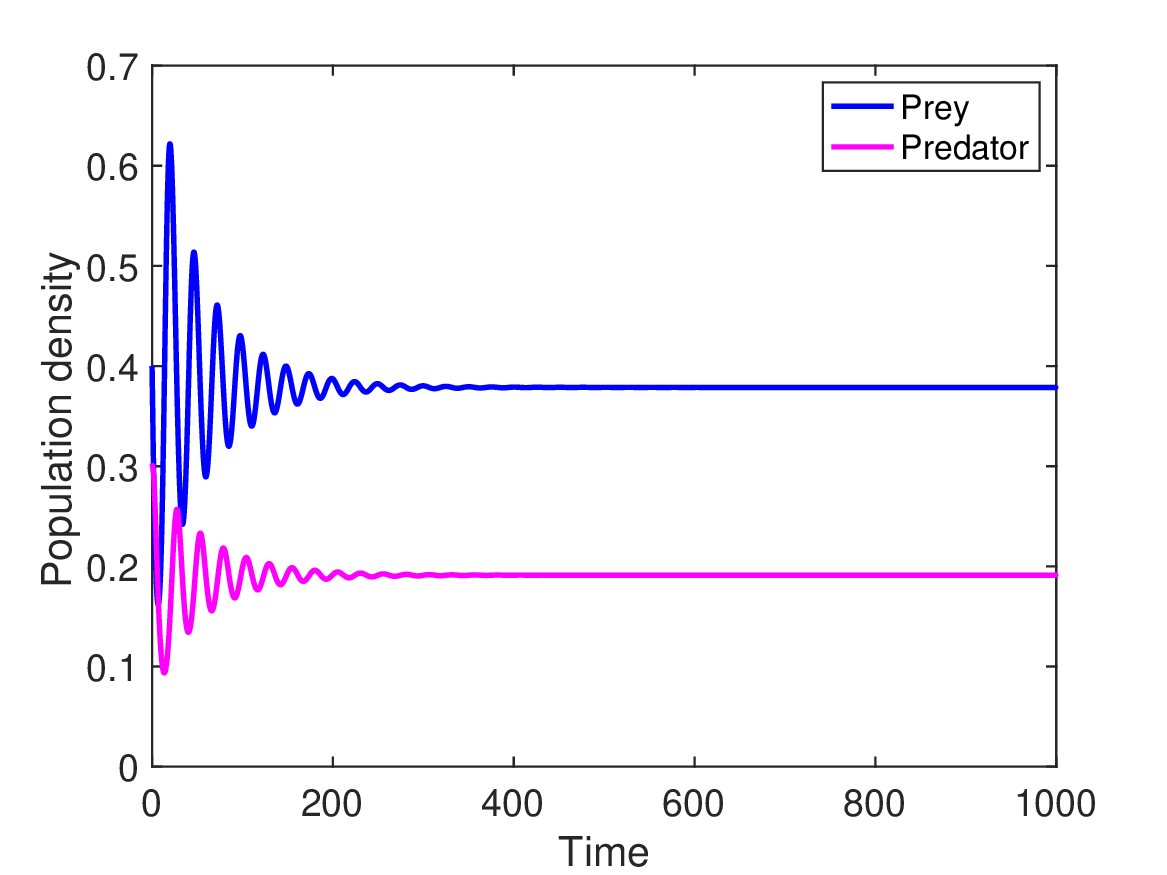}}
\subfigure[]{    
    \includegraphics[width=6.15cm, height=6cm]{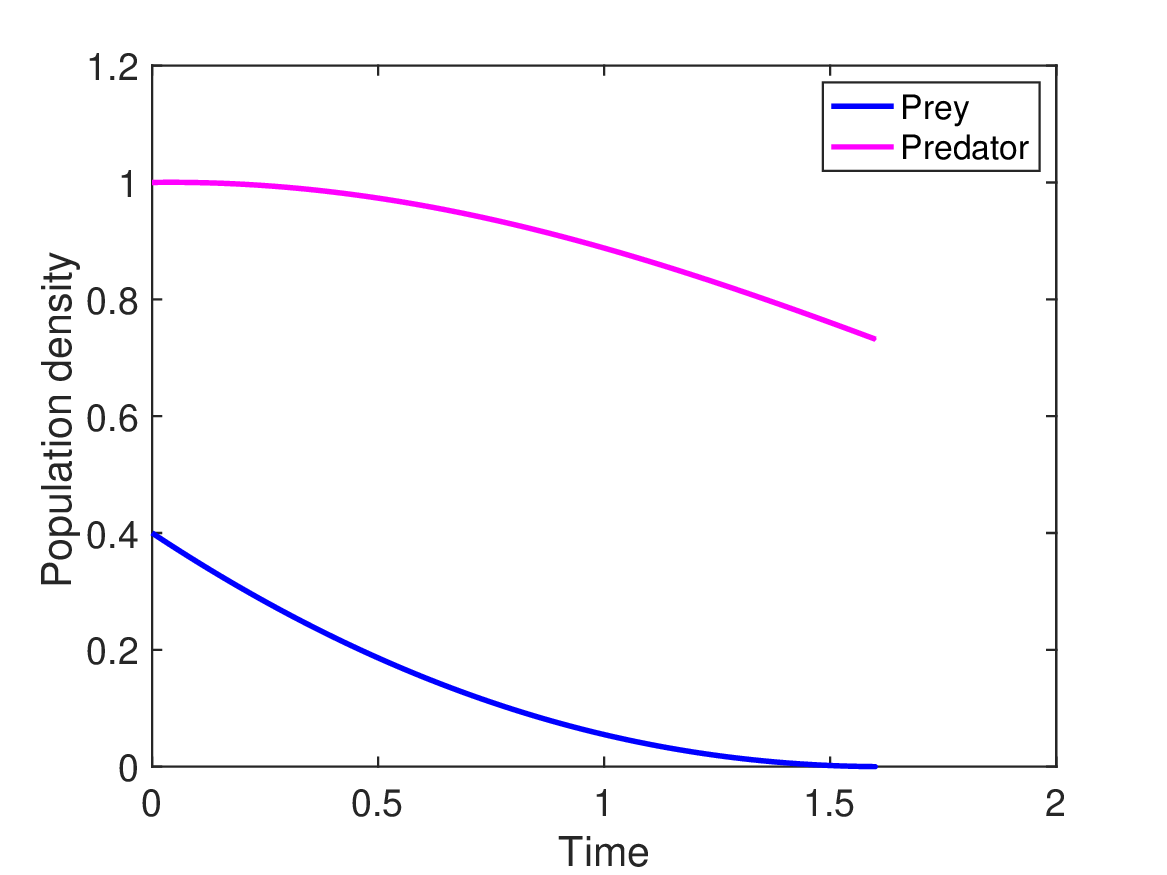}} 
    \subfigure[]{
    \includegraphics[width=6.5cm, height=5cm]{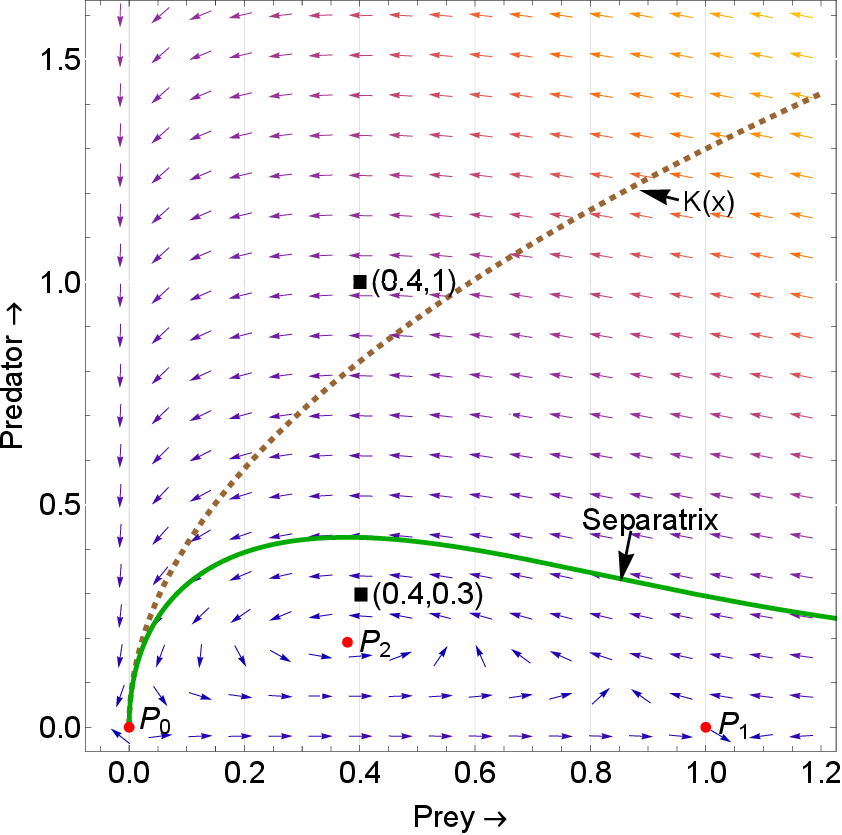}}
 \end{center}
 \caption{(a) Time series plot depicting how the interior equilibrium point $P_2=(0.3787,0.1912)$ stabilizes when the initial condition is chosen as $(0.4,0.3)$  (b) time series plot illustrating finite time extinction at $t=1.624$ of the prey species when the initial condition is chosen as $(0.4,1)$ (c) phase portrait showing the separatrix (green curve), equilibrium points (solid red circles), initial conditions (solid black squares), and the $K(x)$ (brown dashed curve). Here $r=0.5,~\alpha=0.4,$ and $\beta=0.65$. }
      \label{fig:separatrix}
 \end{figure}

\section{Conclusion}
\noindent In this research, we have provided a detailed mathematical proof to show that the interior equilibrium point of the model system \eqref{EquationMain} considered by Pal et al. \cite{P17} can never be globally asymptotically stable. Thus, the global stability does not depend on the parameters $\alpha$ and $\beta$ as claimed by the authors. This key result is corroborated with numerical simulations via Figure \ref{fig:separatrix} and an example. It is important to carefully examine model systems with square root functional response before claiming global existence. The square root functional response is not differentiable at the origin and hence standard stability analysis cannot be carried out. Thus, the solutions are not unique in backward time, see \cite{KRM20}.

\section*{Conflict of interest}
\noindent There is no conflict of interest.

\section*{Author's Contribution}
\noindent KB and KAF contributed equally to this manuscript. All authors read and approved the final manuscript.

\section*{Availability of data and materials}
\noindent Not applicable.



\bibliographystyle{mdpi}

\renewcommand\bibname{References}

\end{document}